\documentclass[12pt, reqno]{amsart}
\usepackage{amsmath, amsthm, fullpage}

\newtheorem{thm}{Theorem}
\newtheorem{lemma}[thm]{Lemma}
\newtheorem{theorem}[thm]{Theorem}
\newtheorem{corollary}[thm]{Corollary}


\newcommand{\al}{\alpha}
\renewcommand{\phi}{\varphi}

\newcommand{\R}{{\mathbb R}}

\renewcommand{\qed}{\rule{3mm}{3mm}}
\renewenvironment{proof}
    {\vspace{1mm}\noindent\textbf{Proof.}}
    {\hspace*{\fill} $\qed$\vspace{1mm}}
\newenvironment{proof_of}[1]
    {\vspace{1mm}\noindent {\bf Proof of #1.}}
    {\hspace*{\fill} $\qed$\vspace{1mm}}

\begin{document}
\title{A lower bound for the amplitude of\\ traveling waves of suspension bridges}
\author{Paschalis Karageorgis \and John Stalker}
\address{School of Mathematics, Trinity College, Dublin 2, Ireland}
\email{pete@maths.tcd.ie} \email{stalker@maths.tcd.ie}

\begin{abstract}
We obtain a lower bound for the amplitude of nonzero homoclinic traveling wave solutions of the McKenna--Walter suspension bridge
model. As a consequence of our lower bound, all nonzero homoclinic traveling waves become unbounded as their speed of propagation
goes to zero (in accordance with numerical observations).
\end{abstract}
\maketitle

We study traveling wave solutions of the McKenna--Walter suspension bridge model
\begin{equation}\label{pde}
u_{tt} + u_{xxxx} + f(u) = 0
\end{equation}
introduced in \cite{mcwa}.  Two of the most standard choices for the nonlinear term $f$ are
\begin{equation}\label{nonl}
f(u) = \max(u,-1), \qquad f(u) = e^u - 1.
\end{equation}
The piecewise linear choice was the original choice in \cite{mcwa} and stems from the fact that the cables of a suspension bridge
will resist movement only in one direction.  The exponential choice was introduced in \cite{chemck} as a smooth version of the
piecewise linear one which behaves the same way as $u\to -\infty$ and also near $u=0$.  The smooth version is more suitable when
it comes to numerics and also seems more appealing to engineers \cite{ds}.

Numerical results for \eqref{pde} go back to McKenna and Walter \cite{mcwa} who studied traveling wave solutions.  Those are
solutions of the form $u=u(x-ct)$, so they satisfy the ODE
\begin{equation}\label{ode}
u'''' + c^2 u'' + f(u) = 0.
\end{equation}
Based on the numerical results of \cite{mcwa} for the piecewise linear model and those of \cite{chemck, chmc} for the exponential
one, traveling waves become unbounded as $c\to 0$.  A rigorous proof of this observation was given by Lazer and McKenna
\cite{lamc}, but it only applied to the piecewise linear model and not the exponential one.

In this paper, we give a simple proof that applies to any nonlinear term $f$ such that
\begin{itemize}
\item[(A1)] $f$ is locally Lipschitz continuous with $uf(u)>0$ for all $u\neq 0$, and
\item[(A2)] $f$ is differentiable at the origin with $f'(0)>0$.
\end{itemize}
Clearly, these assumptions hold for both nonlinearities in \eqref{nonl}.  To show that traveling waves become unbounded as their
speed goes to zero, we actually prove a lower bound for their amplitude, which is of independent interest by itself.  In what
follows, we shall only focus on homoclinic solutions, namely those which vanish at $\pm\infty$.

\begin{theorem}\label{main}
Assume (A1)-(A2) and that $0<c^4<4f'(0)$.  If $u$ is a nonzero homoclinic solution of equation
\eqref{ode}, then $||u||_\infty\geq L(f,c)$, where
\begin{equation}\label{lb}
L(f,c) = \sup \left\{ \delta >0: \frac{f(u)}{u} > \frac{c^4}{4} \:\,\text{whenever\, $0\neq |u| < \delta$} \right\}.
\end{equation}
\end{theorem}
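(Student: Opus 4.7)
I would argue by contradiction: suppose $\|u\|_\infty < L(f,c)$. By definition of the supremum in \eqref{lb}, there exists $\delta$ with $\|u\|_\infty < \delta \leq L(f,c)$ for which $f(v)/v > c^4/4$ whenever $0 \neq |v| < \delta$; in particular $f(u(x))/u(x) > c^4/4$ at every $x$ where $u(x)\neq 0$.

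The key identity comes from multiplying \eqref{ode} by $u$ and integrating over $\R$. After two integrations by parts on $u u''''$ and one on $u u''$, this yields
\[
\int_\R (u'')^2\,dx - c^2 \int_\R (u')^2\,dx + \int_\R u f(u)\,dx = 0.
\]
The sharp bound I need comes from completing the square: $(u'' + (c^2/2)u)^2 \geq 0$ combined with $\int u u''\,dx = -\int (u')^2\,dx$ gives
\[
\int_\R (u'')^2\,dx - c^2 \int_\R (u')^2\,dx \geq -\frac{c^4}{4}\int_\R u^2\,dx.
\]
Substituting into the previous identity yields
\[
\int_\R \Bigl(\frac{f(u)}{u} - \frac{c^4}{4}\Bigr) u^2 \, dx \leq 0.
\]
Since $u \not\equiv 0$ is continuous, the set $\{u \neq 0\}$ is open and nonempty, and by the choice of $\delta$ the integrand above is strictly positive there, which is the desired contradiction.

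The main obstacle is justifying the integrations by parts, which requires $u$, $u'$, $u''$, and $u'''$ to decay sufficiently fast at $\pm\infty$. Under (A1)--(A2) and $c^4 < 4f'(0)$, the linearization of \eqref{ode} at $u=0$ has characteristic equation $r^4 + c^2 r^2 + f'(0) = 0$, whose four roots all have nonzero real parts (their squares have nonzero imaginary parts, as $c^4 - 4f'(0) < 0$). Hence the origin is a hyperbolic equilibrium of the associated first-order system in $(u,u',u'',u''')$, and any homoclinic orbit lies on the intersection of its stable and unstable manifolds, decaying exponentially together with all its derivatives. This is more than enough to kill the boundary terms at $\pm\infty$ and to make all the integrals above absolutely convergent.
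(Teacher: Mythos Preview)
Your proof is correct and follows the same strategy as the paper: multiply \eqref{ode} by $u$, integrate by parts, derive $\int uf(u)\le\tfrac{c^4}{4}\int u^2$, and contradict this using the hypothesis on $f$; your completion of the square $(u''+\tfrac{c^2}{2}u)^2\ge 0$ is precisely the physical-space version of the paper's Fourier estimate. The only substantive difference is in justifying the integrations by parts: you invoke hyperbolicity and stable/unstable manifolds, whereas the paper proves directly in its preliminary lemma that $u\in H^2$ --- a point worth flagging, since the standard stable manifold theorem assumes $C^1$ regularity of the vector field near the equilibrium, which (A1)--(A2) alone do not provide, so the paper's self-contained argument is the safer route.
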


We remark that the lower bound $L(f,c)$ is well-defined because
\begin{equation*}
\lim_{u\to 0} \frac{f(u)}{u} = f'(0) > \frac{c^4}{4}.
\end{equation*}
Moreover, $L(f,c)\to\infty$ as $c\to 0$ by (A1), so nonzero homoclinic solutions of \eqref{ode} become unbounded as $c\to 0$. Our
assumption that $c^4<4f'(0)$ is natural because the eigenvalues of the linearized problem become purely imaginary when $c^4\geq
4f'(0)$, so one does not expect homoclinic solutions in that case.

The existence of homoclinic solutions of \eqref{ode} has been studied by several authors.  We refer the reader to \cite{chemck}
for the piecewise linear case, \cite{kamc} for more general nonlinearities that grow polynomially and \cite{sw} for the
exponential case.  The authors of \cite{BFGK} studied the qualitative properties of solutions for nonlinearities that satisfy
(A1).  There is also an existence result by Levandosky \cite{lev1} when $f(u)= u-|u|^{p-1}u$, but this case is somewhat different
because it does not satisfy (A1) and the solutions remain bounded as $c\to 0$.

To prove Theorem \ref{main}, we shall need to use the following facts from \cite{BFGK}.  We only give the proof of the last two
parts and refer the reader to \cite[Proposition 11]{BFGK} for the first.

\begin{lemma}
Suppose $u$ is a nonzero homoclinic solution of equation \eqref{ode}, namely a solution of equation \eqref{ode} that vanishes at
$\pm\infty$.
\begin{itemize}
\item[(a)] Assume $f$ is continuous with $f(0)=0$.  Then $u',u'',u'''$ must also vanish at $\pm\infty$.
\item[(b)] Assume (A1).  Then $u(s)$ must change sign infinitely many times as $s\to\pm \infty$.
\item[(c)] Assume (A1)-(A2) and that $0<c^4<4f'(0)$.  Then $u\in H^2$.
\end{itemize}
\end{lemma}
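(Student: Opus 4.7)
My plan is to handle (b) by integrating the ODE twice against the constant test function $1$ and exploiting the sign of $f(u)$, and to handle (c) by a quadratic energy identity that is coercive precisely because $c^4<4f'(0)$.

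For (b), I would argue by contradiction. By symmetry it suffices to treat the case $u(s)>0$ on some $[s_0,\infty)$, in which case (A1) gives $f(u)>0$ there. Integrating \eqref{ode} from $s$ to $T$ and letting $T\to\infty$, part (a) kills the boundary terms at $T$; since $\int_s^T f(u)\,dt$ is monotone in $T$ and the remaining side is bounded, I conclude that
\[
u'''(s)+c^2u'(s)=\int_s^\infty f(u)\,dt=:F_1(s)>0
\]
for $s\geq s_0$. Running the same device on the identity $u'''+c^2u'=F_1$ produces
\[
u''(s)+c^2u(s)=-\int_s^\infty F_1(t)\,dt<0.
\]
Combined with $u(s)>0$, this forces $u''(s)<0$ throughout $[s_0,\infty)$, so $u'$ is strictly decreasing; together with $u'\to 0$ this gives $u'(s)>0$, so $u$ is strictly increasing. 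But a strictly increasing function whose limit at $+\infty$ is $0$ must be nonpositive, contradicting $u>0$.

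For (c), I multiply \eqref{ode} by $u$, integrate over $[-T,T]$ and integrate by parts; part (a) disposes of all the boundary terms in the limit, leaving the identity
\[
\int_{-T}^T(u'')^2\,ds-c^2\!\int_{-T}^T(u')^2\,ds+\int_{-T}^T uf(u)\,ds=o(1).
\]
Using (A2) and continuity I pick $c_0\in(c^4/4,f'(0))$ together with $\delta>0$ such that $uf(u)\geq c_0 u^2$ for $|u|\leq\delta$. Part (a) supplies $T_0$ with $|u(s)|\leq\delta$ for $|s|\geq T_0$, so
\[
\int_{-T}^T u^2\,ds\leq c_0^{-1}\!\int_{-T}^T uf(u)\,ds+C,
\]
where $C$ depends only on $u$ on $[-T_0,T_0]$. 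Independently, a further integration by parts and Young's inequality yield
\[
\int_{-T}^T(u')^2\,ds\leq\epsilon\int_{-T}^T(u'')^2\,ds+\frac{1}{4\epsilon}\int_{-T}^T u^2\,ds+o(1)
\]
for every $\epsilon>0$. Feeding these into the basic identity, the task reduces to absorbing both $\int(u'')^2$ and $\int uf(u)$ onto the left, which demands $c^2\epsilon<1$ and $c^2/(4\epsilon c_0)<1$ simultaneously. These conditions are compatible exactly when $c^4<4c_0$, which holds by the choice $c_0>c^4/4$. Any such $\epsilon$ then yields $T$-uniform bounds on $\int(u'')^2$ and $\int uf(u)$, and therefore on $\int u^2$ and $\int(u')^2$, so $u\in H^2$.

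The main obstacle is the coupled absorption step in (c): the window $c^2/(4c_0)<\epsilon<1/c^2$ is nonempty precisely because of the spectral hypothesis $c^4<4f'(0)$, which is where that assumption enters the proof in an essential way.
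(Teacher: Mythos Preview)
Your proofs are correct and track the paper's argument closely, with only cosmetic differences in execution.

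For (b), the paper sets $w=u''+c^2u$, notes $w''=-f(u)\le 0$, and applies a concavity-plus-zero-limit argument twice (first to $w$, then to $u$); this is exactly the differential form of your two successive integrations from $s$ to $\infty$, and both arrive at $u''+c^2u\le 0$ and then at $u\le 0$.

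For (c), the paper packages your energy identity as $H(s)=u'u''-uu'''-c^2uu'$, so that $H(s_2)-H(s_1)=\int_{s_1}^{s_2}\bigl[(u'')^2-c^2(u')^2+uf(u)\bigr]\,ds$ with $H$ bounded by part (a). The one genuine difference is in the absorption step: instead of your interpolation $\int(u')^2\le\epsilon\int(u'')^2+\tfrac{1}{4\epsilon}\int u^2$ together with the window $c^2/(4c_0)<\epsilon<1/c^2$, the paper applies AM--GM once, $(u'')^2+(f'(0)-\varepsilon)u^2\ge -2\sqrt{f'(0)-\varepsilon}\,u''u$, and then integrates by parts \emph{between two zeros $s_1,s_2$ of $u$} (whose existence is furnished by part (b)), so that $-\int_{s_1}^{s_2}u''u=\int_{s_1}^{s_2}(u')^2$ exactly, yielding $H(s_2)-H(s_1)\ge\bigl(2\sqrt{f'(0)-\varepsilon}-c^2\bigr)\int(u')^2$. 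The coercivity constant is positive precisely when $c^4<4f'(0)$, the same spectral threshold you isolate. The paper's trick of using oscillation zeros from (b) is a neat way to bypass the boundary bookkeeping you handle with $o(1)$ terms; your route is equally valid and perhaps more systematic.
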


\begin{proof}
To prove (b), we note that $u(s)$ is a solution of \eqref{ode} if and only if $u(-s)$ is.  Thus, it suffices to show that $u(s)$
changes sign infinitely many times as $s\to \infty$.  Suppose $u(s)$ is eventually non-negative, the other case being similar.
Then $w(s)= u''(s) + c^2 u(s)$ satisfies
\begin{align*}
w''(s) = -f(u(s))\leq 0
\end{align*}
for large enough $s$, so $w(s)$ is eventually concave.  Since $w(s)$ goes to zero by part (a), this implies that $w(s)$ is
eventually non-positive.  Using this fact, we now get
\begin{equation*}
u''(s) = w(s) - c^2 u(s) \leq 0
\end{equation*}
for large enough $s$, so $u(s)$ is eventually concave.  As before, this implies $u(s)$ is eventually non-positive, so we must
actually have $u\equiv 0$, a contradiction.

To prove (c), we consider the function
\begin{equation}\label{H}
H(s) = u'(s)u''(s) -u(s)u'''(s) - c^2 u(s) u'(s).
\end{equation}
We note that $H(s)$ is bounded by part (a) and that a short computation gives
\begin{equation}\label{H2}
H(s_2) - H(s_1) = \int_{s_1}^{s_2} [u''(s)^2 - c^2 u'(s)^2 + u(s) f(u(s))] \,ds
\end{equation}
for all $s_1<s_2$.  Now, fix some $0<\varepsilon< f'(0) -\frac{c^4}{4}$ and let $s_0\in\R$ be such that
\begin{equation*}
|s|\geq s_0 \quad\Longrightarrow\quad \frac{f(u(s))}{u(s)} \geq f'(0) -\varepsilon.
\end{equation*}
Recalling part (b), suppose $s_1,s_2$ are any two roots of $u(s)$ for which $|s_1|,|s_2|> |s_0|$.  Then we may combine the last
two equations to find that
\begin{align} \label{Hin}
H(s_2) - H(s_1) &\geq \int_{s_1}^{s_2} \bigl[ u''(s)^2 - c^2 u'(s)^2 + (f'(0) -\varepsilon) u(s)^2 \bigr] \,ds \\
&\geq -2\sqrt{f'(0)-\varepsilon} \int_{s_1}^{s_2} u''(s) u(s) \,ds - c^2 \int_{s_1}^{s_2} u'(s)^2 \:ds \notag \\
&= \al \int_{s_1}^{s_2} u'(s)^2 \,ds \notag,
\end{align}
where $\al = 2\sqrt{f'(0)-\varepsilon} - c^2>0$.  Since $H(s)$ is bounded by above, this implies $u'\in L^2$.  Using this fact
and the inequality \eqref{Hin}, we conclude that $u\in H^2$.
\end{proof}

\begin{proof_of}{Theorem \ref{main}}
We multiply equation \eqref{ode} by $u$ and integrate by parts to get
\begin{equation*}
\int_{-\infty}^\infty u''(s)^2 \,ds - c^2 \int_{-\infty}^\infty u'(s)^2 \,ds + \int_{-\infty}^\infty uf(u) \,ds = 0.
\end{equation*}
Using the Fourier transform and a trivial estimate, we conclude that
\begin{equation}\label{useful}
\int_{-\infty}^\infty uf(u) \,ds = c^2 \int_{-\infty}^\infty u'(s)^2 \,ds - \int_{-\infty}^\infty u''(s)^2 \,ds
\leq \frac{c^4}{4} \int_{-\infty}^\infty u(s)^2 \,ds.
\end{equation}

Since $\lim_{u\to 0} \frac{f(u)}{u} = f'(0) > \frac{c^4}{4}$, we can always find some $\delta>0$ such that
\begin{equation*}
0\neq |u| < \delta \quad\Longrightarrow\quad u f(u) > \frac{c^4 u^2}{4}.
\end{equation*}
Suppose $\delta>0$ is any number with this property.  Given a nonzero homoclinic solution $u$ for which $||u||_\infty<
\delta$, we can then use the last equation to get
\begin{equation*}
\int_{-\infty}^\infty uf(u) \,ds > \frac{c^4}{4} \int_{-\infty}^\infty u(s)^2 \,ds,
\end{equation*}
contrary to \eqref{useful}.  This means that $||u||_\infty\geq \delta$ for each nonzero homoclinic solution and each such
$\delta$, so the result follows.
\end{proof_of}

The following corollary is a trivial consequence of our estimate \eqref{useful}.  This result refines \cite[Theorem 13i]{BFGK}
which imposes the stronger assumption $\frac{f(u)}{u} \geq f'(0)$.

\begin{corollary}
Assume (A1)-(A2) and that $0<c^4<4f'(0)$.  If $\frac{f(u)}{u} > \frac{c^4}{4}$ for all $u\neq 0$, then equation \eqref{ode} has
no nonzero homoclinic solutions.
\end{corollary}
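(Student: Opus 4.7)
The plan is to deduce the corollary as an immediate contradiction from the integral inequality \eqref{useful} established during the proof of Theorem \ref{main}. That inequality asserts, for every homoclinic solution $u\in H^2$, that
\[
\int_{-\infty}^\infty u(s)f(u(s)) \,ds \leq \frac{c^4}{4} \int_{-\infty}^\infty u(s)^2 \,ds.
\]

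I would argue by contradiction: suppose $u$ is a nonzero homoclinic solution of \eqref{ode}. The hypothesis $f(u)/u > c^4/4$ for all $u\neq 0$ is precisely the pointwise statement that $u f(u) - (c^4/4)u^2 > 0$ whenever $u\neq 0$. Since $u$ is continuous and not identically zero, the open set $\{s:u(s)\neq 0\}$ has positive measure, so integrating this strict pointwise inequality over $\R$ yields
\[
\int_{-\infty}^\infty u(s)f(u(s)) \,ds > \frac{c^4}{4} \int_{-\infty}^\infty u(s)^2 \,ds,
\]
in direct contradiction with \eqref{useful}. Hence no such $u$ can exist.

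There is essentially no obstacle here, which is why the statement is advertised as a trivial consequence of \eqref{useful}. The substantive work was already done in establishing that inequality, which relied on part (c) of the lemma to guarantee $u\in H^2$, so that the integration by parts and the Plancherel-based Fourier estimate used to derive \eqref{useful} are both justified. The only minor bookkeeping step is verifying the strictness of the integral inequality, which follows at once from continuity of $u$ together with $u\not\equiv 0$.
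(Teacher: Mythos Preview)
Your proposal is correct and matches the paper's approach exactly: the paper states the corollary as a trivial consequence of \eqref{useful}, and your argument by contradiction---integrating the strict pointwise inequality $uf(u) > (c^4/4)u^2$ over $\R$ and invoking continuity of a nonzero $u$ to preserve strictness---is precisely the intended one-line deduction. The only alternative route would be to observe that the hypothesis forces $L(f,c)=+\infty$ in Theorem~\ref{main}, but your direct appeal to \eqref{useful} is what the paper has in mind.
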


\bibliographystyle{siam}
\bibliography{biblio}

\begin{thebibliography}{1}

\bibitem{BFGK}
{\sc E.~Berchio, A.~Ferrero, F.~Gazzola, and P.~Karageorgis}, {\em Qualitative
  behavior of global solutions to some nonlinear fourth order differential
  equations}, J. Differential Equations, 251 (2011), pp.~2696--2727.

\bibitem{chemck}
{\sc Y.~Chen and P.~J. McKenna}, {\em Traveling waves in a nonlinearly
  suspended beam: theoretical results and numerical observations}, J.
  Differential Equations, 136 (1997), pp.~325--355.

\bibitem{chmc}
\leavevmode\vrule height 2pt depth -1.6pt width 23pt, {\em Travelling waves in
  a nonlinearly suspended beam: some computational results and four open
  questions}, Philos. Trans. Roy. Soc. London Ser. A, 355 (1997),
  pp.~2175--2184.

\bibitem{ds}
{\sc M.~Diaferio and V.~Sepe}, {\em Smoothed {"slack cable"} models for large
  amplitude oscillations of suspension bridges}, Mechanics Based Design of
  Structures and Machines, 32 (2004), pp.~363--400.

\bibitem{kamc}
{\sc P.~Karageorgis and P.~J. McKenna}, {\em The existence of ground states for
  fourth-order wave equations}, Nonlinear Anal., 73 (2010), pp.~367--373.

\bibitem{lamc}
{\sc A.~C. Lazer and P.~J. McKenna}, {\em On travelling waves in a suspension
  bridge model as the wave speed goes to zero}, Nonlinear Anal., 74 (2011),
  pp.~3998--4001.

\bibitem{lev1}
{\sc S.~Levandosky}, {\em Stability and instability of fourth-order solitary
  waves}, J. Dynam. Differential Equations, 10 (1998), pp.~151--188.

\bibitem{mcwa}
{\sc P.~J. McKenna and W.~Walter}, {\em Travelling waves in a suspension
  bridge}, SIAM J. Appl. Math., 50 (1990), pp.~703--715.

\bibitem{sw}
{\sc S.~Santra and J.~Wei}, {\em Homoclinic solutions for fourth order
  traveling wave equations}, SIAM J. Math. Anal., 41 (2009), pp.~2038--2056.

\end{thebibliography}
\end{document}